\numberwithin{equation}{section}
\def\equationautorefname~#1\null{Equation~(#1)\null}
\def\={\,=\,}
\def\@endtheorem{\endtrivlist}
\newlist{thmlist}{enumerate}{1}
\setlist[thmlist]{label=(\roman{thmlisti}),
	ref=\theThm(\roman{thmlisti}),
	noitemsep}
\declaretheorem[
style=plain,
name=Theorem,
numbered=yes,
refname={Theorem,Theorems},
Refname={Theorem,Theorems}
]{Thm}
\declaretheorem[
style=definition,
name=Remark,
numberlike=Thm,
refname={Remark,Remarks},
Refname={Remark,Remarks}
]{Rem}
\Crefname{Thm}{Theorem}{Theorems}
\Crefname{listthm}{Theorem}{Theorems}
\newcommand{\ii}{\mathrm{i}}
\newcommand{\dd}{\mathrm{d}}
\newcommand{\sage}{\texttt{SAGE}}
\newcommand{\pari}{\texttt{PARI/GP}}
\newcommand{\pmodi}[1]{\mkern4mu(\mathrm{mod} \, #1)}
\newcommand{\kr}[2]{\genfrac(){1pt}{1}{#1}{#2}}
\newcommand{\krd}[2]{\genfrac(){1pt}{0}{#1}{#2}}
\DeclareMathOperator{\SL}{SL}
\lstdefinestyle{mystyle}{
	basicstyle=\ttfamily\footnotesize,
	breakatwhitespace=false,         
	breaklines=true,                 
	keepspaces=true,                 
	numbers=none,                    
	showspaces=false,                
	showstringspaces=false,
	showtabs=false,                  
	tabsize=2
}
\lstdefinelanguage{gp}{
	basicstyle=\color{gray},
	keywords = {for, if, vector, Mat, vecsort, vecmax, apply, concat, sum, error, vecsearch, lift, subst, variable, serprec},
	keywordstyle=\color{blue},
	string=[d]",
	stringstyle=\color{codepurple},
	comment=[l]{\\\\},
	commentstyle=\color{green!50!black}\ttfamily,
	basicstyle=\ttfamily\footnotesize,
	numbers=left,
	numberstyle=\tiny\color{gray!50!white},
	breakatwhitespace=false,         
	breaklines=true,                 
	captionpos=b,                    
	keepspaces=true,                 
	numbers=left,                    
	numbersep=5pt,                  
	showspaces=false,                
	showstringspaces=false,
	showtabs=false,                  
	tabsize=2
}
\lstdefinelanguage{gpin}{ 
	basicstyle=\ttfamily\footnotesize,
	comment=[l]{\\},
	commentstyle=\color{green!50!black}\ttfamily,
	belowskip=-5pt
}
\lstdefinelanguage{gpout}{
	basicstyle = \color{blue}\ttfamily\footnotesize,
}
\lstdefinelanguage{gpoutmsg}{
	basicstyle = \color{black}\ttfamily\footnotesize,
}
\definecolor{codegreen}{rgb}{0,0.6,0}
\definecolor{codegray}{rgb}{0.5,0.5,0.5}
\definecolor{codepurple}{rgb}{0.58,0,0.82}
\lstdefinestyle{sage}{
	commentstyle=\color{codegreen},
	keywordstyle=\color{magenta},
	numberstyle=\tiny\color{gray!50!white},
	stringstyle=\color{codepurple},
	basicstyle=\ttfamily\footnotesize,
	breakatwhitespace=false,         
	breaklines=true,                 
	captionpos=b,                    
	keepspaces=true,                 
	numbers=left,                    
	numbersep=5pt,                  
	showspaces=false,                
	showstringspaces=false,
	showtabs=false,                  
	tabsize=2
}
\lstdefinelanguage{sagein}{ 
	basicstyle=\ttfamily\footnotesize,
	belowskip=-5pt
}
\lstdefinelanguage{sageout}{
	basicstyle = \color{blue}\ttfamily\footnotesize,
}
\def\paragraph{\@startsection{paragraph}{4}%
	{0em}{1ex \@plus 0.5ex \@minus 0.25ex}{-\fontdimen2\font}%
	{%
		\ttfamily
}}
\begin{document}
	
	\title[Relations for the partition function mod 4]{Explicit linear dependence congruence relations for \\ the partition function modulo 4}
	
       \author[Steven Charlton]{Steven Charlton}
		\address{Max Planck Institute for Mathematics \\ Vivatsgasse 7 \\ 53111 Bonn \\ Germany}
	\email{charlton@mpim-bonn.mpg.de}

	\subjclass[2020]{Primary: 11P83. Secondary: 05A17}
	
	\keywords{Partition function, congruences modulo 4, Borcherds products, class number, Hilbert class polynomial, Sturm bound}

	\begin{abstract} 
				Almost nothing is known about the parity of the partition function $p(n)$, which is conjectured to be random.  Despite this expectation, Ono \cite{ono2022pmod4} surprisingly proved the existence of infinitely  many linear dependence congruence relations modulo 4 for $p(n)$, indicating that the parity of the partition function cannot be truly random.  Answering a question of Ono, we explicitly exhibit the first examples of these relations which he proved theoretically exist.  The first two relations invoke 131 (resp. 198) different discriminants \( D \leq 24 k - 1 \) for \( k = 309 \) (resp. \( k = 312 \)); new relations occur for \( k = 316,\, 317,\, 319,\, 321,\, 322,\, 326, \, \ldots  \).
	\end{abstract}

	\leavevmode
	\maketitle

\section{Introduction}

	A \emph{partition} of a non-negative integer \( n \) is a non-increasing sequence \( \lambda_1 \geq \lambda_2 \geq \cdots \geq \lambda_k \) of positive integers, such that \( n = \lambda_1 + \lambda_2 + \cdots + \lambda_k \).  The number of partitions of \( n \) is denoted by \( p(n) \).  For example, there are \( p(4) = 5 \) partitions of 4, namely
	\[
		4 \= 3 + 1 \= 2 + 2 \= 2 + 1 + 1 \= 1 + 1 + 1 + 1 \,.
	\]
	The partition function \( p(n) \) is known to satisfy many congruences, perhaps most famously are Ramanujan's \cite{ramanujan1920congruence,berndt1999ramanujan} congruences
	\begin{align*}
		p(5n+4) & \equiv 0 \pmod{5} \,, \\
		p(7n+5) & \equiv 0 \pmod{7} \,, \\
		p(11n+7) & \equiv 0 \pmod{11} \,.
	\end{align*}
	These congruences have inspired much further work, culminating in particular with the result of Ahlgren and Ono \cite{ahlgren01congruence} that there are infinitely many congruences for all moduli coprime to 6.
	
	In contrast to this rich theory of congruences, almost nothing is known about the partition function modulo 2 or 3.  The general expectation is that parity of \( p(n) \) is ``random''; more precisely Parkin and Shanks \cite{parkin1967parity} conjecture that odd and even parities are `equi-distributed'
	\[
	\lim_{X\to\infty} \frac{\{ n \leq X \mid \text{$p(n)$ is even (resp. odd)} \}}{X} = \frac{1}{2} \,.
	\]
	(And, moreover, that the binary number whose $k$-th binary place encodes the parity of \( p(k) \) should be \emph{normal} in base 2.)  	By an elementary Euclid-style argument, Kolberg~\cite{kolbeg52parity} already established that infinitely many partition values are odd, and infinitely many are even, without specifying which.
	
	Ono \cite{ono1996parity} showed that in any arithmetic progression \( r \pmodi{t} \), there are infinitely many \( N \equiv r \pmod{t} \) with \( p(N) \) even, and that there are infinitely many \( M \equiv r \pmod{t} \) with \( p(M) \) odd, provided there is at leat one such \( M \).  Moreover, Ono \cite{ono1996parity} gave an explicit bound for the smallest \( M \equiv r \pmod{t} \) with \( p(M) \) odd, if any such \( M \) exists.  Radu \cite{radu12subbarao} completed Ono's strategy by showing the initial odd values do exist, making Ono's result on odd values \( p(M) \) in arithmetic progressions unconditional.  Radu \cite{radu12subbarao} also showed that for any arithmetic progression \( r \pmod{t} \), there is \( L \equiv r \pmod{t} \) for which \( 3 \nmid p(L) \).  In particular Radu \cite{radu12subbarao} showed there are no congruences of the form \( p(an + b) \equiv 0 \pmodi{\ell} \), for \( \ell \in \{ 2, 3 \} \).  
	
	For each square-free positive integer \( D \equiv 23 \pmodi{24} \), Ono was able to prove \cite{ono2010parity} that in the quadratic families, \( {p(\frac{Dm^2+1}{24})} \) is infinitely often even (resp. odd), by capturing the parity of \( {p(\frac{Dm^2+1}{24})} \) via modular forms.  Refining this: for  each square-free positive integer \( D \equiv 23 \pmodi{24} \), Ono considered the twisted generating function
	\[
		P(D; q) \coloneqq \sum_{m,n\geq1} \chi_{-D}(n) \chi_{12}(m) p\Big( \frac{Dm^2+1}{24} \Big) q^{n \, m} \,,
	\]
	where \(  \chi_{d}(n) \coloneqq \kr{d}{n} \) is the Kronecker symbol of discriminant \( d \) .  He showed \cite[Theorem 1.1]{ono2022pmod4} that \( P(D; q) \) is congruent modulo 4 to the Fourier expansion at \( \ii \infty \) (where \( q \coloneqq e^{2 \pi \ii \tau} \)) of some weight 2 meromorphic modular form of level \( \Gamma_0(6) \).  (Explicitly, as given in \cite[Section 3]{ono2022pmod4}, to a logarithmic derivative \( \mathcal{L}_D(\tau) \) of a generalised Borcherds product \( \Psi_D(\tau) \).)
	
	Now fix a set \( S \) of positive square-free \( D \equiv 23 \pmodi{24} \), and write \( h_S = \max\{ h(-D) \mid D \in S \} \) to be of the maximum of the class numbers \( h(-D) \) of the associated imaginary quadratic fields \( \mathbb{Q}(\sqrt{-D}) \).  As will be recalled in \autoref{sec:review}, the holomorphic normalisation  \( \widehat{P}_S(D; q) \) relative to the set \( S \), of the generating series \( P(D;q) \), is a weight \( 12 h_S + 2 \) holomorphic modular form on \( \Gamma_0(6) \).  By Sturm \cite{sturm1987congruence} two such modular forms are congruent if their Fourier are congruent for the first 
	\[
		[\SL_2(\mathbb{Z}) \colon \Gamma_0(6)] \cdot (12 h_S + 2) / 12 = 12 h_S + 2 
	\]
	many terms.  Since the class number \( h(-D) \) grows at most like \( \sqrt{D} \log(D) \), while the set of positive square-free \( D \equiv 23 \pmodi{24} \) has positive natural density, one can necessarily construct a set \( S \) with \( \#S > 12 h_S + 2 \) (indeed infinitely many disjoint sets), as we will recall in \autoref{sec:search}.  This guarantees the existence of linear dependence congruence relations modulo 4 between \( \widehat{P}_S(D; q) \), whence the partition function is `not truly random modulo 4'. \medskip
	
	The goal of this note is to answer a question of Ono about these linear dependence congruence relations modulo 4, by explicitly the first few such relations (one of which is in fact already a relation modulo 2).  In particular, we have the following result (where the schematic identities in \cref{thm:main1,thm:main2} are given explicitly in \autoref{sec:explicit}).
	\begin{Thm}\label{thm:main-schematic}
		Order identities between \( \smash{\widehat{P}_S}(D; q) \) according to the maximal discriminant appearing, then:
		\begin{thmlist}[itemsep=1ex]
		\item \label{thm:main1} The first linear dependence congruence relation modulo 4 between \( \widehat{P}_{S^{(1)}}(D; q) \) occurs at \( D = 24 \cdot 309 - 1 \), amongst a set \( S^{(1)} = \{ 24k - 1 \mid k \in \{ 2, 4, 6, \ldots, 309 \} \} \) of cardinality 131 and maximal class number \( h_{S^{(1)}} = 92 \).
		\item\label{thm:main2} The second linear dependence congruence relation modulo 4 between \( \widehat{P}_{S^{(2)}}(D; q) \) occurs at \( D = 24 \cdot 312 - 1 \), amongst a set \( S^{(2)} = \{ 24k - 1 \mid k \in \{ 2, 4, 6, \ldots, 312 \} \} \) of cardinality 198 and maximal class number still \( h_{S^{(2)}} = 92 \).
		\item\label{thm:main3} Including the first two identities, new identity appears at each \( D = 24k - 1 \), for
		\[
			k = 309,\, 312,\, 316,\, 317,\, 319,\, 321,\, 322,\, 326,\, 327,\, 332,\, 336,\, 337,\, \ldots
		\]
		\end{thmlist}
	\end{Thm}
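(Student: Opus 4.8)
The plan is to recognise \cref{thm:main-schematic} as the output of a single, finite, exactly verifiable computation of linear algebra over \( \mathbb{Z}/4\mathbb{Z} \). By the discussion recalled in \autoref{sec:review}, each \( \widehat{P}_S(D;q) \) is a holomorphic modular form of weight \( 12 h_S + 2 \) on \( \Gamma_0(6) \), so by the Sturm bound two such forms are congruent modulo 4 as soon as their Fourier expansions agree through order \( 12 h_S + 1 \), i.e.\ in their first \( 12 h_S + 2 \) coefficients. Consequently a \( \mathbb{Z}/4\mathbb{Z} \)-linear dependence congruence relation \( \sum_{D \in S} c_D \widehat{P}_S(D;q) \equiv 0 \pmodi{4} \) is detected exactly by the left kernel of the finite matrix \( M_S = \big( [q^j]\widehat{P}_S(D;q) \bmod 4 \big)_{D \in S,\, 0 \le j \le 12 h_S + 1} \). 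The entire theorem therefore reduces to assembling \( M_S \) for the stated sets and certifying the structure of its kernel; no analytic input beyond Sturm's theorem is needed once the forms are in hand.

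First I would compute, for each \( D \in S \), the reduction modulo 4 of the \( q \)-expansion of \( \widehat{P}_S(D;q) \) out to the Sturm bound. Following the explicit recipe of \autoref{sec:review}, this factors through the twisted generating series
\[
	P(D;q) \= \sum_{m,n \ge 1} \chi_{-D}(n)\,\chi_{12}(m)\, p\Big(\tfrac{Dm^2+1}{24}\Big)\, q^{nm} \pmodi 4 \,,
\]
which requires the Kronecker symbols together with the partition values \( p\big(\tfrac{Dm^2+1}{24}\big) \bmod 4 \) for all \( m \) up to the Sturm bound, multiplied by the weight-\( 12 h_S \) holomorphic normalising factor whose \( q \)-expansion I would likewise expand modulo 4. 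Reducing the resulting integer coefficients into \( \{0,1,2,3\} \) then supplies the rows of \( M_S \).

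Because \( \mathbb{Z}/4\mathbb{Z} \) is not a field, the kernel must be extracted by exact integer methods: I would lift \( M_S \) to an integer matrix and read off the \emph{full} module of mod-4 relations by a Smith-normal-form computation over \( \mathbb{Z} \), which correctly captures the zero-divisor relations that Gaussian elimination over a field would miss. To obtain the ordering in \cref{thm:main1,thm:main2,thm:main3}, I would adjoin the discriminants of \( S \) in increasing order and track when the relation module first acquires a generator whose coefficient on the newest discriminant \( D_r = 24k-1 \) is nonzero and which is independent of the relations already present; such an event is precisely a \emph{new identity with maximal discriminant \( D_r \)}. One then checks that no such event occurs for \( k < 309 \) (equivalently, that the corresponding forms are \( \mathbb{Z}/4\mathbb{Z} \)-independent), that the first occurs at \( k = 309 \) and the second at \( k = 312 \), and that the subsequent occurrences are \( k = 316,\, 317,\, 319,\, \ldots \) as listed; the relation that in fact already holds modulo 2 is flagged by reducing its verified congruence modulo 2.

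I expect the main obstacle to be the scale of the partition computation rather than the linear algebra. For \( D \approx 24\cdot 312 \) and \( m \) as large as the Sturm bound \( 12 h_S + 2 = 1106 \) (using \( h_S = 92 \)), the arguments \( (Dm^2+1)/24 \) grow to roughly \( 4 \times 10^8 \), so one must evaluate \( p(n) \bmod 4 \) throughout this range; carrying only residues modulo 4 in the pentagonal-number recurrence keeps this feasible but dominates the runtime. A secondary subtlety is that \( h_S \), and hence the weight, Sturm bound, and normalisation, can shift as \( S \) grows; since the theorem records \( h_{S^{(1)}} = h_{S^{(2)}} = 92 \), I would confirm that the maximal class number has already stabilised at 92 across the relevant window and use a single consistent normalisation when comparing the nested sets, so that the \emph{first} and \emph{second} relations are ranked on an equal footing.
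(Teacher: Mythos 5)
Your proposal is correct and is essentially the paper's own proof: both reduce the theorem to a finite, exact verification by expanding each \( \widehat{P}_S(D;q) \) modulo 4 out to the Sturm bound \( 12h_S+2 \), assembling the coefficient matrix, extracting the full module of \( \mathbb{Z}/4\mathbb{Z} \)-relations (the paper via \pari{}'s \texttt{matsolvemod}, you via Smith normal form over \( \mathbb{Z} \) --- equivalent ways of handling the non-field base ring), and ranking relations by the maximal discriminant with nonzero coefficient, using a single consistent normalisation across the nested sets. The only divergence is implementational: the paper evaluates the \(\sim\)3 million needed values \( p\big(\tfrac{Dm^2+1}{24}\big) \bmod 4 \) individually via the Rademacher-type routine in \texttt{FLINT}, rather than sieving all \( p(n) \bmod 4 \) by the pentagonal-number recurrence as you suggest, but this does not affect correctness.
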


	The proof of this theorem is purely computational in nature.  For \cref{thm:main1,thm:main2} we compute each series to sufficiently high order using \pari{} \cite{PARI2}, and check the linear combinations vanish to this order.  The necessary order is given by the Sturm bound, here \( 12 h_{S^{(i)}} + 2 = 1106 \) in both cases.  For \cref{thm:main3} we compute all series up to \( D \leq 24\cdot350 - 1 \) (say), to the precision required by the Sturm bound: the maximum class number \( h(-D) \) of the square-free positive \( D \equiv 23 \pmod{24} \) up to \( D \leq 24\cdot350-1 \) is \( h_{\leq350} = 134 \), with associated Sturm bound \( 12 h_{\leq 350} + 2 = 1610 \).  The identities are given by solving the resulting modulo 4 linear system using the \pari{} routine \texttt{matsolvemod}, from which one can read off the necessary new discriminants at each step.  (The identities in \cref{thm:main1,thm:main2} are, of course, part of this solution space.)\medskip
	
	In \autoref{sec:review} we recall the details of the construction of the holomorphic normalisation \( P_S(D; q) \), and indicate how to compute it in \pari{} \cite{PARI2}.
	In \autoref{sec:search} we recall Ono's argument that a linear dependence congruence relation modulo 4 exists, explicitly identify the search space, and reduce it a \emph{tractable} computation.
	Finally in \autoref{sec:explicit} we give the explicit identities found during our search.  
		
	\subsection*{Computational resources.}  The initial computations for this note took place on an  {\small \texttt{Intel(R) Xeon(R) Gold 6148 CPU @ 2.40GHz}} machine with 160 threads, and 3TB RAM.  After discovering the shortest identities involved only \( D \leq 24 \cdot 312 - 1 \), the computations were revised to be practical on a single thread, with limited RAM.  The relevant \sage{} and \pari{} code to find all identities up to \( D \leq 24 \cdot 350 - 1 \) (using precision \( 12 h_S + 2 + 10 \)) is given \autoref{sec:code}.  The code is attached to the arXiv submission, along with a precomputed table of the necessary partition values in \texttt{values\_part350.txt}.
	
	\subsection*{Acknowledgements.} I am grateful to the Max Planck Institute for Mathematics, Bonn, for support, hospitality, computing resources and excellent working conditions during the preparation of this note.  I am also grateful to the organisers (Kathrin Bringmann, Walter Bridges, Johann Franke) of the \href{http://www.mi.uni-koeln.de/Bringmann/Konferenz2024.html}{International Conference on Modular Forms and $q$-Series} (11--15 March 2024, Universit\"at zu K\"oln) for the chance to learn more about partitions, and for catalysing this work via talks of and with Ken Ono.

	\section{Review of Borcherds products and holomorphic normalisations}\label{sec:review}
	
	The first main result (Theorem 1.1) of \cite{ono2022pmod4} is that for square-free positive \( D \equiv 23 \pmodi{24} \) the twisted generating series
	\[
		P(D; q) \coloneqq \sum_{m,n\geq1} \chi_{-D}(n) \chi_{12}(m) p\Big( \frac{Dm^2+1}{24} \Big) q^{n \, m} \,,
	\]
	is congruent modulo 4 to a weight 2 meromorphic modular form \( \mathcal{L}_D(\tau) \) on \( \Gamma_0(6) \).  The proof \cite[Sections 2 and 3]{ono2022pmod4} gives an explicit construction for \( \mathcal{L}_D(\tau) \), which we briefly recall. \medskip
	
	Write \( P(q) \) for the generating series of partition values
	\[
	P(q) \coloneqq \sum_{n=0}^\infty p(n) q^n = \prod_{m=1}^\infty (1 - q^m)^{-1} = 1 + q + 2q^2 + 3q^3 + 5q^4 + O(q^5)\,.
	\]
	The third-order mock theta functions \( f(q) \) and \( \omega(q) \) are defined by
	\begin{align*}
		f(q) \coloneqq {} & 1 + \sum_{n=1}^\infty \frac{q^{n^2}}{(1 + q)^2 (1 + q^2)^2 \cdots (1 + q^n)^2 } =   1 + q - 2q^2 + 3q^3 - 3q^4 + O(q^5) \\[1ex]
		\omega(q) \coloneqq {} & \sum_{n=1}^\infty \frac{q^{2n(n+1)}}{(1 - q)^2 (1 - q^3)^2 \cdots (1 - q^{2n+1})^2 } = 1 + 2q + 3q^2 + 4q^3 + 6q^4 + O(q^{5})
	\end{align*}
	Note: the coefficients of \( f(q) \) and \( \omega(q) \) are integral, documented in OEIS sequences \href{https://oeis.org/A000025}{A000025}, and  \href{https://oeis.org/A053253}{A053253} respectively.  Define the vector-valued function \( R(\tau) = (R_0(\tau), \ldots, R_{11}(\tau)) \), with components
	\[
		R_k(\tau) = \begin{cases}
			0 & \text{if $k=0,3,6,9$} \\
			\chi_{-12}(k) q^{-1} f(q^{24}) & \text{if $k = 1,5,7,11$} \\
			2q^8(-\omega(q^{12}) + \omega(-q^{12})) & \text{if $j=2$} \\
			2q^8(-\omega(q^{12}) - \omega(-q^{12})) & \text{if $j=4$} \\
			2q^8(\omega(q^{12}) + \omega(-q^{12})) & \text{if $j=8$} \\
			2q^8(\omega(q^{12}) - \omega(-q^{12})) & \text{if $j=10$} \,.
		\end{cases}
	\]
	An important observation \cite[Lemma 3.2]{ono2022pmod4} is that \( P(q) \equiv f(q) \pmodi{4} \).  Together with the fact that \( 2 ( \pm \omega(q^{12}) \pm \omega(-q^{12})) \equiv 0 \pmodi{4} \), one directly has
	\begin{equation}\label{eqn:rmod4}
		R_k(\tau) \equiv \begin{cases}
			\chi_{-12}(k) q^{-1} f(q^{24})  \equiv \chi_{-12}(k) q^{-1} P(q^{24}) \pmodi{4} & \text{if $k=1,5,7,11$} \\
			0 \pmodi{4} & \text{otherwise} \,.
		\end{cases}
	\end{equation}
	Write 
	\(
		R_k(\tau) \eqqcolon \sum_{n \gg -\infty} C_R(j;n) q^n \,,
	\)
	for the coefficients of the \( R_k(\tau) \) component.
	
	For square-free positive \( D \equiv 23 \pmodi{24} \), the generalised Borcherds product \( \Psi_D(\tau) \) is defined by
	\[
		\Psi_D(\tau) \coloneqq \prod_{m=1}^\infty \psi_D(q^m)^{C_R(\overline{m}; Dm^2)} \,, \quad 
	\text{ where } \quad 
		\psi_D(X) \coloneqq \prod_{\text{$b$ mod $D$}} (1 - e^{-2\pi \ii b / D})^{\text{$\kr{-D}{b}$}} \,,
	\]
	and \( \overline{m} \) denotes the canonical residue class of \( m \) modulo 12.  Theorem 2.1 \cite{ono2022pmod4} (referring to Section 8.2 of \cite{bruinier2010heegner}), establishes that \( \Psi_D(\tau) \) is a weight 0 meromorphic modular form on \( \Gamma_0(6) \).
	
	Finally recall the holomorphic differential operator \( \mathbb{D} \coloneqq q \frac{\dd}{\dd q} = \frac{1}{2\pi\ii} \frac{\dd}{\dd \tau} \).  Define
	\[
		\mathcal{L}_D(\tau) \coloneqq \frac{1}{-\sqrt{-D}} \frac{(\mathbb{D} \Psi_D)(\tau)}{\Psi_D(\tau)} \,.
	\]
	Then Lemma 3.1 \cite{ono2022pmod4} establishes that \( \mathcal{L}_D(\tau) \) is a weight 2 meromorphic modular form on \( \Gamma_0(6) \) (as \( \mathbb{D} \) agrees with the Ramanujan-Serre  derivative in weight 0), and that the poles of\( \mathcal{L}_D(\tau) \) are simple and located at the \( \Gamma_0(6) \) Heegner points with discriminant \( -D \). Lemma 3.1 \cite{ono2022pmod4} also calculates the following \( q \)-expansion:
	\[
		\mathcal{L}_D(\tau) = \sum_{m=1}^\infty C_R(\overline{m}; Dm^2) m \cdot \!\!\! \sum_{\substack{n=1 \\ \gcd(n,D)=1}}^\infty \!\!\! \krd{-D}{n} q^{m \, n} \,.
	\]
	It now follows directly from \autoref{eqn:rmod4} and the congruence \( \chi_{-12}(m) m \equiv \chi_{12}(m) \pmodi{4} \), that
	\[
		\mathcal{L}_D(\tau) \equiv P(D; q) \pmod{4} \,.
	\]
	So \( P(D; q) \) is indeed congruent modulo 4 to a weight 2 meromorphic modular form, the idea now is to rescale this to obtain a \emph{holomorphic normalisation} in a finite dimensional space.
	\medskip
	
	Recall \( \Delta(\tau) \) (the modular discriminant) is the unique normalised weight 12 cusp form for \( \SL_2(\mathbb{Z}) \); its $q$-expansion starts
	\[
	\Delta(\tau) =  q - 24q^2 + 252q^3 - 1472q^4 + 4830q^5 + O(q^{6}) \,.
	\]
	For fundamental discriminants \( -D < 0 \), (in particular for \( D \in S \)), define the \emph{Hilbert class polynomial} \( H_{-D}(X) \) as follows,
	\[
	H_{-D}(X) \coloneqq \prod_{Q \in \mathcal{Q}_{-D} / \operatorname{PSL}_2(\mathbb{Z})} \hspace{-1em} (X - j(\tau_Q)) \in \mathbb{Z}[X] \,.
	\]
	Here
	\begin{itemize}[itemsep=1ex,left=1em]
		\item \( \mathcal{Q}_{-D} /  {\operatorname{PSL}_2(\mathbb{Z})} \) is the set of reduced (necessarily primitive when \( -D \) is fundamental) positive-definite integral binary quadratic forms \( ax^2 + bxy + cy^2 \) of discriminant \( b^2 - 4ac = -D \), 
		\item \( \tau_Q \coloneqq \frac{-b + \sqrt{-D}}{2a} \in \mathbb{H} \) is the Heegner point of a quadratic form \( Q = ax^2 + bxy + cy^2 \), 
		\item  \( j(\tau) \coloneqq  E_4(\tau)^3 / \Delta(\tau) \) is the elliptic $j$-invariant, a hauptmodul for the field of modular functions on \( \SL_2(\mathbb{Z}) \), whose $q$-expansion starts 
		\[ 
		j(\tau)  = q^{-1} + 744 + 196884q + 21493760q^2 +  864299970q^3 + O(q^4) \,, 
		\]
		and
		\item \( E_4(\tau) \coloneqq 1 + 240 \sum_{n=1}^\infty \sum_{d \mid n} d^3 q^n = 1 + 240q + 2160q^2 + 6720q^3 + O(q^4) \) is the weight 4 Eisenstein series.
	\end{itemize}
	In particular, the degree of \( H_{-D}(X) \) is the class number \( h(-D) \).
	
	\begin{Rem}
		The Hilbert class polynomial \( H_{-D}(X) \) can be computed in \pari{} using the command \texttt{polclass(-D)}.  The resulting polynomial generates the Hilbert class field for the imaginary quadratic field \( \mathbb{Q}(\sqrt{-D}) \).
	\end{Rem}
	
	Continuing as in \cite[Section 4]{ono2022pmod4}: since the poles of \( \mathcal{L}_D(\tau) \) are simple, and located at the discriminant \( -D \) Heegner points, it follows 
	\[
		\mathcal{L}_D(\tau) \cdot \Delta(\tau)^{h(-D)} \cdot H_{-D}(j(\tau)) 
	\]
	is a wight \( 12 h(-D) +2 \) \emph{holomorphic} modular form on \( \Gamma_0(6) \).  Finally, since \( j(\tau) = E_r(\tau)^3 / \Delta(\tau) \equiv 1 / \Delta(\tau) \pmodi{4} \), we have that
	\begin{equation}\label{eqn:lnorm}
		\mathcal{L}_D(\tau) \cdot \Delta(\tau)^{h(-D)} \cdot H_{-D}(j(\tau))  \equiv P(D;q) \cdot \Delta(\tau)^{h(-D)} \cdot H_{-D}(1/\Delta(\tau)) \pmod{4} \,.
	\end{equation}
	Multiplying by further factors of \( \Delta(\tau) \) increases the weight, but still gives holomorphic modular forms on \( \Gamma_0(6) \), allowing us to find linear dependence congruence relations modulo 4, as follows. \medskip
	
	Let \( S \) be a finite set of square-free positive integers \( D \equiv 23 \pmod{24} \).  Write \( h_S \coloneqq \max\{ h(-D) \mid D \in S \} \) to be the maximum of the class numbers \( h(-D) \), ranging over \( D \in S \).  The \emph{holomorphic normalisation} of \( P(D; q) \), relative to the set \( S \), is defined to be
	\[
		\widehat{P}_S(D; q) \coloneqq P(D; q) \cdot \Delta(\tau)^{h_S} \cdot H_{-D}(1 / \Delta(\tau)) \,.
	\]
	By \autoref{eqn:lnorm}, each of these is congruent modulo 4 to a holomorphic modular form of weight \( 12 h_S + 2 \) on \( \Gamma_0(6) \).  Hence we obtain:
	
	\begin{Thm}[Theorem 1.2, \cite{ono2022pmod4}]\label{thm:ono4}
			If \( \# S > 12 h_S + 2 \), then the \( q \)-series \( \{ \widehat{P}_S(D; q) \mid D \in S \} \) are linearly dependent modulo 4.
	\end{Thm}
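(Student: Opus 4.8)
The plan is to reduce the statement to elementary linear algebra over the finite ring \( \mathbb{Z}/4\mathbb{Z} \), using only the modularity and holomorphicity established above together with the Sturm bound. Set \( w \coloneqq 12 h_S + 2 \). First I would record the two inputs. By \autoref{eqn:lnorm} and the definition of the holomorphic normalisation, each \( \widehat{P}_S(D; q) \) has integral Fourier coefficients and is congruent modulo 4 to a holomorphic modular form of weight \( w \) on \( \Gamma_0(6) \); since such forms constitute a \( \mathbb{Z} \)-module, any integral linear combination \( \sum_{D \in S} c_D \widehat{P}_S(D; q) \) is again congruent modulo 4 to a holomorphic modular form of weight \( w \) on \( \Gamma_0(6) \), and its class modulo 4 depends only on \( (c_D \bmod 4)_{D \in S} \). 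Second, the Sturm bound \cite{sturm1987congruence}, in the form valid for the prime-power modulus 4, asserts that a holomorphic modular form of weight \( w \) on \( \Gamma_0(6) \) with integral coefficients whose first \( [\SL_2(\mathbb{Z}) \colon \Gamma_0(6)] \cdot w / 12 = w \) Fourier coefficients vanish modulo 4 in fact has all its coefficients \( \equiv 0 \pmod 4 \); so vanishing modulo 4 of such a form is detected by its first \( w \) coefficients.

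Assembling these, I would consider the \( \mathbb{Z}/4\mathbb{Z} \)-linear map \( \Phi \colon (\mathbb{Z}/4\mathbb{Z})^{\#S} \to (\mathbb{Z}/4\mathbb{Z})^{w} \) sending a tuple \( (c_D)_{D \in S} \) to the first \( w \) Fourier coefficients, reduced modulo 4, of the combination \( \sum_{D \in S} c_D \widehat{P}_S(D; q) \); this is well-defined and \( \mathbb{Z}/4\mathbb{Z} \)-linear by the first paragraph. The crucial point, and the one place where the usual rank–nullity reasoning over a field is unavailable, is that the hypothesis \( \#S > w \) forces \( \Phi \) to be non-injective purely on cardinality grounds: the domain has \( 4^{\#S} \) elements and the codomain only \( 4^{w} < 4^{\#S} \), so by pigeonhole two distinct tuples share an image and their difference gives a nonzero element \( (c_D)_{D \in S} \in \ker \Phi \).

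Finally, for any such nonzero kernel element the combination \( \sum_{D \in S} c_D \widehat{P}_S(D; q) \) is congruent modulo 4 to a weight \( w \) holomorphic modular form on \( \Gamma_0(6) \) whose first \( w \) coefficients vanish modulo 4; by the Sturm bound it vanishes identically modulo 4, exhibiting the desired nontrivial linear dependence. The main obstacle is precisely the non-field structure of \( \mathbb{Z}/4\mathbb{Z} \), which I sidestep by the counting argument rather than a dimension count; a secondary point requiring care is the validity of Sturm's bound modulo the prime power 4 rather than a prime, which one obtains from the prime case by the standard filtration argument.
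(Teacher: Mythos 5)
Your proposal is correct and is essentially the paper's own proof: congruence of each \( \widehat{P}_S(D;q) \) modulo \(4\) to a weight \(12h_S+2\) holomorphic form on \( \Gamma_0(6) \), Sturm's bound to upgrade vanishing of the initial coefficients to identical vanishing, and a linear-algebra count producing a nonzero kernel tuple --- with your pigeonhole argument being the right way to make precise, over the non-field \( \mathbb{Z}/4\mathbb{Z} \), the existence step that the paper (and its appeal to prime-power Sturm) leaves implicit. The one detail you should add is the paper's parenthetical observation that each \( \widehat{P}_S(D;q) \) has vanishing constant term by construction: the safe form of Sturm's bound requires vanishing at all indices \(0\) through \(12h_S+2\) inclusive, which is one more coefficient than your map \( \Phi \) records, and it is precisely this free vanishing at index \(0\) that lets \(12h_S+2\) imposed conditions suffice in the boundary case \( \#S = 12h_S+3 \), where a pigeonhole count against \(12h_S+3\) conditions would fail.
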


	\begin{proof}[Proof {\normalfont (Section 4, \cite{ono2022pmod4})}]
		A theorem of Sturm \cite{sturm1987congruence} establishes that two weight \( k = 12 h_S + 2 \) holomorphic modular of level \( \Gamma_0(6) \) are congruent if their coefficients agree for all indices up to (and including)
		\[
		[ \SL_2(\mathbb{Z}) \colon \Gamma_0(6)] \cdot \frac{k}{12} = 12 h_S + 2 \,.
		\]
		As \( \#S > 12 h_S + 2\), there is necessarily a linear combination of the holomorphic normalisations \( \widehat{P}_S(D; q) \), whose first \( 12h_S + 2 \) coefficients vanish modulo 4 (by construction the constant coefficient of \( P(D;q) \), and hence \( \widehat{P}_S(D;q) \) vanishes, removing one degree of freedom).  This forces the linear combination to vanish identically modulo 4, giving the required linearly dependence.
	\end{proof}

	The punchline is now as follows.  Since the class number \( h(-D) \), for fundamental discriminants \( -D < 0 \), satisfies the inequality \cite[Lemma 2.2]{griffin2021elliptic},
	\begin{equation}\label{eqn:classno}
		h(-D) \leq \frac{\sqrt{D}}{\pi} (\log{D} + 2) \,,
	\end{equation}
	but the natural density of square-free integers \( D \equiv 23 \pmod{24} \) is positive, one is eventually guaranteed to find a set \( S \) satisfying the hypothesis of \autoref{thm:ono4}.  We make this precise in the next section.

	\section{Defining and refining the search space}
	\label{sec:search}

	Consider the set
	\[
	S(t) \coloneqq \{ 0 < D \leq t \mid \text{$D \equiv 23 \pmodi{24}$, and $D$ square-free} \} \,,
	\]
	of all square-free positive integers \( D \equiv 23 \pmodi{24} \) up to \( t \) inclusive.  (The negative of) each of these is a fundamental discriminant, and since the bound on \( h(-D) \) in \autoref{eqn:classno} is monotonically increasing, we have
	\[
		h_{S(t)} \leq \frac{\sqrt{t}}{\pi} (\log{t} + 2) \,.
	\]
	
	Landau \cite[\S174,pp.~633--636]{landau53handbuch}, on the other hand, established an asymptotic counting the number of square-free integers in a given residue class.  Let \( Q(x; k, \ell) \) denote the number of square-free integers \( N \leq x \) with \( N \equiv \ell \pmod{k} \).  In the special case \( \gcd(\ell,k) = 1 \), Landau \cite[p.~635]{landau53handbuch} first established:
	\[
		\lim_{x \to \infty} \frac{Q(x; k,\ell)}{x} = \frac{1}{\zeta(2)} \cdot \frac{1}{k}  \prod_{\substack{\text{$p$ prime} \\ p \mid k}} \frac{1}{1 - p^{-2}} \,.
	\]
	(He handled the case \( \gcd(\ell,k) \geq 1 \) afterwards \cite[p. 636]{landau53handbuch}, using this.)  Observe that \( \# S(t) = Q(t; 24, 23) \) with \( \gcd(23,24) = 1 \), so we obtain from Landau's formula that
	\[ 
		\lim_{t \to \infty} \frac{\# S(t)}{t} =
		\frac{1}{\zeta(2)} \cdot \frac{1}{24} \prod_{\substack{\text{$p$ prime} \\ {p \mid 24}}} \frac{1}{1-p^{-2}}  =  \frac{1}{16\zeta(2)} = 0.0379954438\ldots > 0 \,.
	\]
	In particular the set \( S(t) \) has positive density, containing approximately 3.8\% of the positive integers, and \( \# S(t) \sim \frac{t}{16\zeta(2)} \), which grows faster than \( h_{S(t)} = \frac{\sqrt{t}}{\pi} (\log{t} + 2) \).  Hence eventually \( 12 h_{S(t)} + 2 < \# S(t) \) holds for sufficiently large \( t \), 
	and one necessarily obtains a linear dependence congruence relation modulo 4 amongst the \( \{ \widehat{P}_{S(t)}(D; q) \mid D \in S(t) \} \).  $\llbracket$As Ono notes in the remarks under \cite[Theorem 1.2]{ono2022pmod4}, this  establishes the unconditional \emph{existence} of such linear dependence congruence relations modulo 4.$\rrbracket$\medskip
	
	We now identify a tractably small set \( S \), which we can computationally investigate, in order to find the first such linear dependence congruence relation  modulo 4 for the partition function.
		
	\paragraph{First estimate for \( t \)} As a first estimate for the necessary \( t \), with that \( 12 h_{S(t)} + 2 < \#S(t) \), which guarantees a linear dependence via \autoref{thm:ono4}, solve the inequality 
	\[
		12 \frac{\sqrt{t}}{\pi} (\log{t} + 2) + 2 \leq \frac{t}{16\zeta(2)} \,,
	\]
	This can be done numerically with a variety of techniques\footnote{say Brent's method as implemented in the \pari{} command \texttt{solve(X=a,b,expr)}, which finds the real root of \texttt{expr} in the interval \( [a,b] \), where \texttt{expr(a)*expr(b) <= 0}}, to find 
	\[
	t \geq 2\,877\,166.69\ldots \,.
	\]
	Using \pari{}, one can easily enumerate the set \( S(2\,877\,167) \), and determine its size, as follows.
\begin{lstlisting}[language=gpin]
<@\inpr@> #[ D | D <- [1..2877167], D%24==23 && issquarefree(D)]
\end{lstlisting}\smallskip
\begin{lstlisting}[language=gpout]
<@\outpr@> 109337
\end{lstlisting}
	That is to say: for \( t_1 \coloneqq  2\,877\,167 \),  we have \( \# S(t_1) = 109\,337 \).  On the other hand, \autoref{eqn:classno} tells us that, \( h_{S(t_1)} \leq 9\,109.76\ldots \).  So we indeed  
	\[
	 12 h_{S(t_1)} + 2 \leq 109\,319.23\ldots < 109\,337 = \#S(t_1) \,.
	\]
	Hence we are guaranteed to find a linear dependence congruence relation modulo 4 amongst the \( 109\,337 \) elements of \( S(2\,877\,167) \).  This set is however essentially computationally intractable: already linear algebra (or the modulo 4 equivalent), is unfeasible for such a large system, especially if many solutions are expected.  In the next few paragraphs, we will refine this as much as possible.
	
	\paragraph{Improved estimate for \( t \)} The class number bound in \autoref{eqn:classno}, given by Griffin-Tsai-Ono is certainly not optimal.  We can instead compute the class number \( h(-D) \) directly and exactly using the command \texttt{qfbclassno(-D)} in \pari{}.  In particular, we can iterate through positive square-free \( D \equiv 23 \pmod{24} \), and track the maximum class number \( h_S \) encountered amongst all \( h(-D) \), until the inequality \( 12 h_S + 2 > \#S \) holds.  By doing so we can obtain obtain a significantly smaller candidate set satisfying the hypothesis of \autoref{thm:ono4}.  The following  \pari{} code excerpt accomplishes this.
\begin{lstlisting}[language=gpin]
<@\inpr@> hS=0; S=0; D=-1;
<@\inpr@> while(S <= 12*hS+2, D+=24; if(issquarefree(D), S++; hS=max(hS,qfbclassno(-D)); ) );
<@\inpr@> [D, hS, S]
\end{lstlisting}\smallskip
\begin{lstlisting}[language=gpout]
<@\outpr@> [315791, 1001, 12015]
\end{lstlisting}
	From the output in the final line, we learn that for \( t_2 \coloneqq 315\,791 = 24 \cdot 13\,158 - 1\), there are \( \#S(t_2) = 12\,015 \) many square-free positive \( D \equiv 23 \pmod{24} \leq t_2 \), and that the maximal class number amongst all of these discriminants is \( h_{S(t_2}) = 1\,001 \).  $\llbracket$One can check that \( h(-D) = 1\,001 \) occurs precisely for \( D = 312,311 = 24 \cdot 13\,013 - 1\), in the set \( S(t_2) \).$\rrbracket$
	
	Since \( 12 h_{S(t_2)} + 2 = 12\,014 < 12\,015 = \#S(t_2) \), the hypothesis of \autoref{thm:ono4} holds, and we are guaranteed to find a linear dependence congruence relation modulo 4 from the \( 12\,015 \) elements of \( S(315\,791) \).
	
	\paragraph{Further reductions to \( \#S \)}  By prioritising discriminants with smaller class number first, one can reduce the search space further.  We know from above that by all square-free \( D \equiv 23 \pmod{24} \) in the range \( 1 \leq D \leq 315\,791 \eqqcolon t_2 \), with class number \( h(-D) \leq 1\,001 \), we find a set satisfying the hypothesis of \autoref{thm:ono4}.  Instead let us select only \( h(D) \leq h_0 \), for certain smaller \( h_0 \), until we also find a valid set.  Again this can be accomplished with a short \pari{} program.
\begin{lstlisting}[language=gpin]
<@\inpr@> hList = [qfbclassno(-D) | D <- [1..315791], D%24==23 && issquarefree(D)];
<@\inpr@> hCnt = vector(vecmax(hList), h, #[ hD | hD <- hList, hD == h] );
<@\inpr@> h0 = 1; while(vecsum(hCnt[1..h0]) <= 12*h0+2, h0 = h0++);
<@\inpr@> [h0, vecsum(hCnt[1..h0])]
\end{lstlisting}\smallskip
\begin{lstlisting}[language=gpout]
<@\outpr@> [264, 3182]
\end{lstlisting}	
	From the output in the last line, we learn \( h_0 = 264 \) is the first time that selecting square-free positive \( D \equiv 23 \pmodi{24} \) with \( D \leq t_2 \) and \( h(-D) \leq h_0 \) gives a set satisfying the hypothesis of \autoref{thm:ono4}.  In particular, we obtain a set of 3\,182 elements, and indeed \( 12 \cdot 264 + 2 = 3\,170 \leq 3\,182 \).
	
	We have the following (abridged) table of class numbers and their multiplicities.
	\begin{table}[h!]
		\begin{tabular}{c|ccccccccccccccc}
			$ h $ & 3 & 5 & 7 & $\cdots$ & 100 & 101 & 102 & $\cdots$ & 200 & 201 & 202 & $\cdots$& 262 & 263 & 264 
			\\
			\hline
			$ \# \{ h(-D)=h \} $ & 1 & 1 & 1 & $\cdots$ & 14 & 6 & 11 & $\cdots$ & 36 & 8 & 13 & $\cdots$ & 23 & 10 & 52
		\end{tabular}
		\caption{Multiplicities of the class number \( h(-D) = h \), for square-free \( D \equiv 23 \pmodi{24} \) with \( 1 \leq D \leq 315\,791 \).}
		\label{tbl:classno}
	\end{table} \vspace{-1em}
	\begin{Rem}
		Although the class number problem has been completely solved for \( h(-D) \leq 100 \) by Watkins \cite{watkins2004classno}, (no doubt with further progress in the interim) we cannot easily exclude there being further square-free \( D \equiv 23 \pmodi{24} \), with \( D \geq t_2 \coloneqq 315\,791 \) and \( h(-D) \leq 264 \).  However, a direct computation for square-free \( D \equiv 23 \pmodi{24} \) in the range \( t_2 < D \leq 10^7 \) suggests all further class numbers are \( h(-D) \geq 271 \).
\begin{lstlisting}[language=gpin]
<@\inpr@> vecmin([qfbclassno(-D) | D <- [315792..10^7], D%24==23 && issquarefree(D)])
\end{lstlisting}\smallskip
\begin{lstlisting}[language=gpout]
<@\outpr@> 271
\end{lstlisting}
	$\llbracket$This takes under 2 minutes to run, so could easily be extended somewhat further.$\rrbracket$  In particular, we did not lose anything by restricting our attention to \( D \leq t_2 \coloneqq 315\,791 \).
	\end{Rem}

	We hence obtain the following candidate for a set \( S' \) satisfying the hypothesis of \autoref{thm:ono4}, :
	\[
		S' \coloneqq \{ 1 \leq D \leq 315\,791  \mid \text{$D \equiv 23 \pmodi{24}$, $D$ square-free, $h(-D) \leq 264$} \} \,.
	\]
	The computations above established that \( \#S' = 3\,182 \), and \( h_{S'} \coloneqq \max\{ h(-D) \mid D \in S' \} = 264 \) (as this class number does occur).  
	
	\paragraph{Final reductions to $\#S$}
	Since \( 12 h_{S'} + 2 = 3\,170 \) and \( \#S' = 3\,182 \), one can actually remove 11 elements from \( S' \), while preserving the inequality \( 12 h_S + 2 < \#S \) required for \autoref{thm:ono4}.  These ought to be the 11 largest elements, which we find with the following \pari{} code.  $\llbracket$Finding the 12-th largest element gives us the new bound to search until.$\rrbracket$
\begin{lstlisting}[language=gpin]
<@\inpr@> Sp = [ D | D <- [1..315791], D%24==23 && issquarefree(D) && qfbclassno(-D) <= 264];
<@\inpr@> vecextract(Sp, "-12..-1")
\end{lstlisting}
\smallskip
\begin{lstlisting}[language=gpout]
<@\outpr@> [241943, 245303, 248543, 254087, 255623, 259823, 259967, 262607,
  263783, 268367, 274103, 288047]
\end{lstlisting}
	Therefore take
	\[
		S \coloneqq \{ 1 \leq D \leq 241\,943  \mid \text{$D \equiv 23 \pmodi{24}$, $D$ square-free, $h(-D) \leq 264$} \} \,.
	\]
	This has cardinality \( \#S = 3\,171 \), and again \( h_{S} = 264 \), since we have removed at most 11 of the 52 elements with \( h(-D) = 264 \), in \autoref{tbl:classno}.  (In particular already \( D = 29\,279 = 24 \cdot 1\,120 -1 \) has \( h(-D) = 264 \).)

	\medskip
	
	The computation is now at a potentially tractable size.  However, we must compute a large number of values of the partition function, which is still time-intensive.  In particular, we need to compute just under
	\[
		\#S \cdot (12 h_{S} + 2) \cdot \tfrac{1}{3} = 3\,350\,690 \,,
	\]
	values (the factor $\frac{1}{3}$ arising from the condition \(\gcd(12,m) = 1 \) implied by \( \chi_{12}(m) \) in \( P(D; q) \)).  The largest value we need is
	\[
		p\Big( \frac{D m^2 + 1}{24} \Big) = p(101\,238\,639\,001) = 43744138\cdots\text{\texttt{\tiny 354\,428 digits omitted}}\cdots19363129 \equiv 1 \pmod{4}  \,,
	\]
	arising from \( D = \max(S) = 241\,943 = 24 \cdot 10\,081 - 1 \), and \( m = 12 h_{S} + 1 = 3\,169 \).  In \pari{} this computation takes approximately 20 minutes:
\begin{lstlisting}[language=gpin]
<@\inpr@> numbpart(101238639001);
<@\inpr@> ## \\ displays the runtime of the last command
\end{lstlisting}
\smallskip
\begin{lstlisting}[language=gpoutmsg]
***   last result: cpu time <@\color{olive}20min, 20,028 ms@>, real time <@\color{olive}20min, 20,059 ms@>.

\end{lstlisting}
	The same computation in \sage{} \cite{sage}, which calls the relevant routine in the optimised \texttt{C} library \texttt{FLINT} \cite{flint} takes under 2 seconds:
\begin{lstlisting}[language=sagein]
<@\inprsage@> %time _ = sage.libs.flint.arith.number_of_partitions(101238639001);
\end{lstlisting}
\smallskip
\begin{lstlisting}[language=sageout]
<@\outprsage@>CPU times: user 1.25 s, sys: 7.79 ms, total: 1.26 s
Wall time: 1.26 s
\end{lstlisting}
	The computation of all necessary partition values
	\[
		\Big\{ p\Big(\frac{Dm^2+1}{24}\Big) \pmodi{4} \Bigm\lvert  D \in S \,, 1 \leq m \leq 3\,169 \,, \gcd(m,12) = 1 \Big\} \,,
	\]
	in parallel with 160 threads on an \texttt{Intel(R) Xeon(R) Gold 6148 CPU @ 2.40GHz} machine has a runtime of around 2.5 hours in \sage{}.  After precomputing the necessary values modulo 4 in \sage{} first, we perform the rest of the search in \pari{} (which handles power series, and the conversion to/from arithmetic modulo 4 more transparently).

	\section{Computational results and explicit identities} 
	\label{sec:explicit}

	Precomputation of the partition values takes 2.5 hours with 160 threads.  Computation of the twisted generating series, the Hilbert class polynomials, and the resulting holomorphic normalisations takes in total 2 minutes on 160 threads.  Finally solving of the linear system modulo 4 takes 13 minutes.  We obtain a solution space containing 2\,421 non-zero linear congruence relations modulo 4.
	
	The first identity \( I_1 \) in the solution space consists of 131 non-zero terms.  The second identity \( I_2 \) consists of 211 non-zero terms, however the combination \( I_1 + I_2 \) (coefficients taken modulo 4) is slightly shorter, consisting of 198 non-zero terms. This is the explicit form of \cref{thm:main2} that we present.

	\subsection*{Identity 1}  Introduce the following set of discriminants\medskip
	\begin{equation*}
	\tag{131 terms} 
	\mathclap{\begin{aligned}[c]
		S^{(1)} &=  \Bigl\{ 24 k - 1 \mid k \in \bigl\{ 
	\begin{alignedat}[t]{16} 
&&    2, &&    4, &&    6, &&    8, &&   10, &&   11, &&   20, &&   22, &&   23, &&   25, &&   32, &&   35, &&   48, &&   52, &&   53,\\
&&   54, &&   55, &&   56, &&   57, &&   58, &&   60, &&   61, &&   64, &&   68, &&   70, &&   73, &&   75, &&   78, &&   81, &&   85,\\
&&   86, &&   88, &&   91, &&   92, &&   94, &&   97, &&   98, &&  101, &&  102, &&  103, &&  105, &&  106, &&  107, &&  109, &&  111,\\
&&  112, &&  114, &&  115, &&  117, &&  121, &&  125, &&  129, &&  130, &&  132, &&  135, &&  137, &&  139, &&  143, &&  144, &&  147,\\
&&  152, &&  153, &&  155, &&  159, &&  160, &&  163, &&  164, &&  167, &&  168, &&  169, &&  171, &&  173, &&  177, &&  178, &&  180,\\
&&  181, &&  182, &&  184, &&  185, &&  186, &&  188, &&  189, &&  190, &&  191, &&  195, &&  196, &&  197, &&  198, &&  201, &&  204,\\
&&  208, &&  211, &&  214, &&  215, &&  216, &&  218, &&  220, &&  226, &&  227, &&  228, &&  229, &&  231, &&  232, &&  235, &&  241,\\
&& \:\: 242, && \:\:  247, && \:\:  252, && \:\:  254, && \:\:  255, &&\:\:   256, && \:\:  260, && \:\:  261, && \:\:  262, && \:\:  263, && \:\:  266, && \:\:  269, && \:\:  271, && \:\:  276, && \:\:  281,\\
&&  282, &&  283, &&  284, &&  286, &&  287, &&  291, &&  296, &&  298, &&  301, &&  304, &&  309\phantom{,}
	\mathrlap{\bigr\} \, \smash{\Bigr\}} \,.}  
	\end{alignedat}\hspace{-1em}
	\end{aligned}}
	\end{equation*}
	\medskip

	\noindent We have \( h_{S^{(1)}} \coloneqq \max\{ h(-D) \mid D \in S^{(1)} \} = 92 \), and \( \max(S^{(1)}) = 7415 = 24\cdot309 - 1 \).  Since the following linear combination vanishes modulo 4 to order \( q^{12h_{S^{(1)}} + 2} = q^{1106} \) already, Sturm's bound \cite{sturm1987congruence} implies it vanishes identically modulo 4:
	\[
	\sum_{D \in S^{(1)}} 2 \cdot \widehat{P}_{S^{(1)}}(D; q) \equiv 0 \pmod{4} \,.
	\]
	This is the explicit form of \cref{thm:main1}.
	
	\subsection*{Identity 2}  Introduce the following sets of discriminants\medskip
	\begin{align*}
	\tag{66 terms}
	 & \hspace{-6em} \begin{aligned}[c] T_1 &=  \bigl\{ 24 k - 1 \mid k \in \bigl\{ 
		\begin{alignedat}[t]{12} 
&&    1, &&    2, &&    3, &&   12, &&   13, &&   20, &&   21, &&   29, &&   32, &&   36, &&   67, \\
&&   69, &&   73, &&   75, &&   84, &&   95, &&  100, &&  103, &&  115, &&  120, &&  121, &&  132, \\
&&  133, &&  140, &&  143, &&  147, &&  160, &&  164, &&  165, &&  166, &&  167, &&  168, &&  176, \\
&&  177, &&  181, &&  185, &&  187, &&  188, &&  189, &&  190, &&  192, &&  195, &&  197, &&  200, \\
&&  207, &&  208, &&  210, &&  211, &&  214, &&  215, &&  218, &&  219, &&  221, &&  225, &&  228, \\
&& \:\: 231, && \:\: 239, && \:\: 248, && \:\: 250, && \:\: 255, && \:\: 270, && \:\: 276, && \:\: 291, && \:\: 302, && \:\: 305, && \:\: 312\phantom{,}
\mathrlap{\bigr\} \, \smash{\Bigr\}} \,,} 
		\end{alignedat}
		\end{aligned}
	\\[1em]
	\tag{55 terms}
	& \hspace{-6em}  \begin{aligned}[c] T_2 &= \bigl\{ 24 k - 1 \mid k \in \bigl\{ 
	\begin{alignedat}[t]{12} 
&&    4, &&    9, &&   15, &&   22, &&   27, &&   31, &&   43, &&   51, &&   59, &&   60, &&   63, \\
&&   66, &&   68, &&   78, &&   79, &&   87, &&  101, &&  102, &&  107, &&  108, &&  110, &&  111, \\
&&  112, &&  113, &&  118, &&  119, &&  126, &&  139, &&  141, &&  144, &&  151, &&  152, &&  154, \\
&&  159, &&  161, &&  170, &&  172, &&  178, &&  183, &&  184, &&  193, &&  209, &&  212, &&  216, \\
&& \:\: 236, && \:\: 242, && \:\: 246, && \:\: 247, && \:\: 260, && \:\: 262, && \:\: 269, && \:\: 272, && \:\: 284, && \:\: 296, && \:\: 298\phantom{,}
\mathrlap{\bigr\} \, \smash{\Bigr\}} \,,} 
	\end{alignedat}
	\end{aligned}
	\\[1em]
	\tag{77 terms}
	& \hspace{-6em}  \begin{aligned}[c] T_3 &= \bigl\{ 24 k - 1 \mid k \in \bigl\{ 
	\begin{alignedat}[t]{12} 
&&    5, &&    6, &&    7, &&   10, &&   11, &&   16, &&   23, &&   26, &&   28, &&   33, &&   35, \\
&&   37, &&   42, &&   44, &&   48, &&   52, &&   54, &&   65, &&   70, &&   72, &&   77, &&   80, \\
&&   83, &&   86, &&   89, &&   91, &&   93, &&   97, &&  104, &&  105, &&  109, &&  114, &&  122, \\
&&  125, &&  127, &&  128, &&  129, &&  130, &&  131, &&  136, &&  137, &&  138, &&  142, &&  155, \\
&&  157, &&  169, &&  173, &&  175, &&  180, &&  182, &&  198, &&  202, &&  203, &&  205, &&  213, \\
&&  217, &&  220, &&  222, &&  226, &&  227, &&  229, &&  232, &&  235, &&  244, &&  251, &&  252, \\
&& \:\: 254, && \:\: 257, && \:\: 266, && \:\: 271, && \:\: 278, && \:\: 281, && \:\: 282, && \:\: 286, && \:\: 289, && \:\: 301, && \:\: 309\phantom{,}
\mathrlap{\bigr\} \, \smash{\Bigr\}} \,.} 
	\end{alignedat}
	\end{aligned}
	\end{align*}\medskip
	
	\noindent Write \( S^{(2)} = T_1 \cup T_2 \cup T_3 \).  Then we have \( h_{S^{(2)}} \coloneqq \max\{ h(-D) \mid D \in S^{(2)} \} = 92 \), and  \( \max( S^{(2)} ) = 7487  = 24\cdot312 - 1 \).  Since the following linear combination vanishes modulo 4 to order \( q^{12h_{S^{(2)}} + 2} = q^{1106} \) already, Sturm's bound \cite{sturm1987congruence} implies it vanishes identically modulo 4:
	\[
		\sum_{D \in T_1} \widehat{P}_{S^{(2)}}(D; q) + \sum_{D \in T_2} 2 \cdot \widehat{P}_{S^{(2)}}(D; q) + \sum_{D \in T_3} 3 \cdot \widehat{P}_{S^{(2)}}(D; q) \equiv 0 \pmod{4} \,.
	\]
	This is the explicit form of the linear dependence congruence relation modulo 4 given in \cref{thm:main2}.
	
	\appendix
	
	\section{\sage{} and \pari{} routines to verify \autoref{thm:main-schematic}}
	\label{sec:code}
	
	The following code precomputes the partition values necessary, and finds all relations amongst the holomorphic normalisations \( \widehat{P}_S(D; q) \) for \( S = \{ 1 \leq D \leq 24\cdot350-1 \mid \text{$D \equiv 23 \pmodi{24}$, square-free} \} \).  In particular the relations in \cref{thm:main1,thm:main2} are (essentially) the first two solution vectors.  The claim in \cref{thm:main3} follows by listing which discriminant gives the last non-zero entry in each non-zero solution vector.\medskip
	
	The \sage{} code in file \texttt{sage\_part350.sage} is used to precompute the necessary partition values, it generates the file \texttt{values\_part350.txt} included with the arXiv submission.  Runtime is approximately 45 minutes--1 hour on a single thread.\medskip

\begin{lstlisting}[language=Python,style=sage,captionpos=t,caption={{\sage{} code to compute partition values necessary to find relations for \( S = \{ 1 \leq D \leq 24\cdot350-1 \mid \text{$D \equiv 23 \pmodi{24}$, square-free} \} \).  File: \texttt{sage\_part350.sage}}\\}]
# use FLINT to compute partition numbers
from sage.libs.flint.arith import number_of_partitions

# class number computation is faster via gp/pari
def qfbclassno(d): return int(gp.qfbclassno(d));
def part(x): return (number_of_partitions(x) % 4);

# Search discriminants up to Dmax
Dmax = 24*350-1;
S = [d for d in range(1, Dmax+1) if d%24 == 23 and is_squarefree(d)];
hS = max([qfbclassno(-d) for d in S]);

# For this Dmax, hS = 134, and so compute to precision q^1620, 
# slightly more than needed by the Sturm bound, for certainty
bound = 12*hS+2 + 10;

pargs = [ (d*m^2+1)/24 for d in S for m in range(1, bound+1) if gcd(m,12) == 1];

# compute the partition values and output as a string with gp/pari syntax
pvals = [ [n, part(n)] for n in sorted(pargs) ];
result = "parts = " + str(pvals) + ";";

# write this to the a file for use with gp
outfile = open("values_part350.txt", "w");
outfile.write(result);
outfile.close();
\end{lstlisting}
	\medskip

	The main computation occurs via \pari{} in file \texttt{pari\_part350.gp}.  Precomputed partition values are loaded from \texttt{values\_part350.txt}.  Generation of holomorphic normalisations, and solving the resulting linear system modulo 4, both take 3--4 minutes.  For discriminants up to \( 24\cdot350-1 \), there are 15 non-zero solutions.  These solutions are accessible in the variable \texttt{solnvec[,i]}, for \( 1 \leq i \leq 15 \), and can be displayed ``nicely'' via \texttt{display(solnvec[,i])}.

\begin{lstlisting}[language=gp,captionpos=t,caption={{\pari{} code to find relations amongst the series \( \widehat{P}_S(D;q) \), for \( S = \{ 1 \leq D \leq 24\cdot350-1 \mid \text{$D \equiv 23 \pmodi{24}$, square-free} \} \).  File: \texttt{pari\_part350.gp}}\\}]
\\ load and process the precomputed partition values
\\ stored in variable "parts" in the given file
read("values_part350.txt");
parts = vecsort(parts);

\\ return precomputed partition values by searching for the argument
pargs = [p[1] | p <- parts];
pvals = [p[2] | p <- parts];
part(n) = { if(frac(n) != 0, 0, 
    i = vecsearch(pargs, n);
    if(i==0, error("partition not computed"), pvals[i]);
  );    }

\\ now search discriminants up to Dmax
Dmax = 24*350 - 1;
S = [D | D <- [1..Dmax], D%24==23 && issquarefree(D)];
hS = vecmax(apply(D -> qfbclassno(-D), S));

\\ For this Dmax, hS = 134, and so compute to precision q^1620, 
\\ slightly more than needed by the Sturm bound, for certainty
bound = 12*hS+2 + 10;

\\ compute Delta and Delta^-1 mod 4 to the necessary precision
deltaser = sum(i=1, bound, (ramanujantau(i) * Mod(1,4))*q^i, q*O(q^bound));
deltainvser = 1/deltaser;

\\ routine to convert a power-series to a list of coefficients
serVec(s) = Vec(s, -serprec(s, variable(s)));

\\ displays useful statistics and the D = 24k-1 having coeff  1, 2 or 3,
\\ [#terms, [#1's, #2's, #3's], [k with 1's, k with 2's, k with 3's]]
display(vin) = {   v = vin % 4; proc = [[],[],[]];
  for(i=1, #v, 
    if(v[i] != 0, proc[v[i]] = concat(proc[v[i]], (S[i]+1)/24 ));
  );
  [#concat(proc), apply(i->#i, proc), proc];    }

\\ return just the list of nonzero k's appearing
nonzero(vin) = vecsort(concat(display(vin)[3]));

\\ compute the P(D; q) series
Pd(D) = {   sum(m=1, bound,
    part((D*m^2 + 1)/24) * kronecker(12, m) * 
    sum(n=1, bound \ m , kronecker(-D, n)  * q^(m*n) ),
   q*O(q^bound) );    }

\\ compute the holomorphic normalisation
holPd(D) = Pd(D) * deltaser^hS * subst(polclass(-D), x, deltainvser);

\\ compute all series, and extract integer matrix of coefficients 
ser = apply(holPd, S);
eqns = Mat(apply(i -> lift(serVec(i + q*O(q^bound)))~, ser));

\\ solve  "eqns*vec = 0 modulo 4", and return full solution space in result[2]
result = matsolvemod(eqns, 4, 0, 1) % 4;
solvecs = select(i -> vecmax(i)!=0, result[2]);

\\ the first solution in Theorem 1(i) and Identity 1
display(solvecs[,1])

\\ the second solution
display(solvecs[,2])

\\ the simpler combination in Theorem 1(ii) and Identity 2
display(solvecs[,1] + solvecs[,2])

\\ the k's in D = 24k-1 which give new identities in Theorem 1(iii)
vector(#solvecs, i, vecmax( nonzero(solvecs[,i]) ))
\end{lstlisting}

	\bibliographystyle{habbrv2} 
	\bibliography{bibliography.bib}

\end{document}